\documentclass[a4paper, 11pt, twoside, leqno]{article}

\usepackage[T1]{fontenc}
\usepackage[utf8x]{inputenc}
\usepackage[english]{babel}
\usepackage{lmodern}               
\usepackage{amsmath,amsthm,amssymb}
\usepackage{mathrsfs,esint,bbm,stmaryrd}
\usepackage{enumitem,graphicx,xcolor}
\usepackage[textwidth=16cm,centering,headheight=24pt]{geometry}
\usepackage{authblk}


\newtheorem{theorem}{Theorem}           
\newtheorem{corollary}[theorem]{Corollary}
\newtheorem{lemma}[theorem]{Lemma}
\newtheorem{prop}[theorem]{Proposition}

\theoremstyle{definition}              
\newtheorem{definition}{Definition}

\theoremstyle{remark}                  

\newtheorem{remark}[theorem]{Remark}


\DeclareMathOperator{\Id}{Id}                                       
\DeclareMathOperator{\dist}{dist}                                   
\DeclareMathOperator{\spt}{spt}                                     
\DeclareMathOperator{\curl}{\curl}                                  
\let\div\relax
\DeclareMathOperator{\div}{div}                                     

\newcommand{\abs}[1]{\left| #1 \right|}                             
\newcommand{\norm}[1]{\left\| #1 \right\|}                          
\newcommand{\mres}
{\mathbin{\vrule height 1.6ex depth 0pt width 0.13ex\vrule height 0.13ex depth 0pt width 1.3ex}}
\newcommand{\csubset}{\subset\!\subset}                             

\DeclareMathAlphabet{\mathpzc}{OT1}{pzc}{m}{it}


\newcommand{\J}{\mathrm{J}}
\renewcommand{\d}{\mathrm{d}}
\newcommand{\N}{\mathbb{N}}       
\newcommand{\R}{\mathbb{R}}
\newcommand{\Z}{\mathbb{Z}}

\newcommand{\F}{\mathbb{F}}

\renewcommand{\SS}{\mathbb{S}}
\newcommand{\G}{\mathbf{G}}       

\newcommand{\Q}{\mathbf{Q}}
\newcommand{\n}{\mathbf{n}}

\newcommand{\NN}{\mathscr{N}}     

\newcommand{\EE}{\mathscr{E}}

\renewcommand{\H}{\mathscr{H}}

\SetSymbolFont{stmry}{bold}{U}{stmry}{m}{n}  

\newcommand{\PR}{\mathbb{R}\mathrm{P}^2}

\renewcommand{\S}{\mathbf{S}}


\definecolor{lightblue}{rgb}{0.22,0.45,0.70}   
\definecolor{darkgray}{gray}{0.4}    
\definecolor{lightgray}{gray}{0.8}

\title{
 Improved partial regularity for manifold-constrained minimisers of subquadratic energies
}

\author{Giacomo Canevari and 
Giandomenico Orlandi\thanks{Dipartimento di Informatica --- Universit\`a di Verona,
Strada le Grazie 15, 37134 Verona, Italy. \\
\emph{E-mail addresses}: \texttt{giacomo.canevari@univr.it},
  \texttt{giandomenico.orlandi@univr.it}}}

\date{\today}


\begin{document}

\maketitle

\begin{abstract}
 We consider minimising $p$-harmonic maps from three-dimensional domains 
 to the real projective plane, for~$1<p<2$. These maps arise as least-energy 
 configurations in variational models for nematic liquid crystals. We show that
 the singular set of such a map decomposes into a $1$-dimensional
 set, which can be physically interpreted as a non-orientable line defect, and 
 a locally finite set, i.e. a collection of point defects.
%
\end{abstract}

\section{Introduction}
\label{sect:intro}

Nematic liquid crystals are an important example of an intermediate state of matter between liquids and crystalline solids.
The constituent molecules --- which, in many cases, have an elongated, rod-like shape ---
can flow and their centers of mass 
are randomly distributed in space, but the molecular axes tend to align locally to each other.
In other words, the molecules do not have positional order, but they have long-range orientational order. 
However, the orientational order is broken by the so-called defects, which are
regions of sharp contrast in the optical properties of the material. 
Nematic liquid crystals in a three-dimensional sample typically exhibit defects that are 
localised at isolated points or along lines.

From the point of view of continuum theories, 
the simplest way to describe a nematic liquid crystal is to introduce
a unit vector field, the molecular director~$\n$,
which represents the average orientation of the molecules at a given point.
However, in view of the (statistical) head-to-tail symmetry of the molecules,
it is more accurate to consider unoriented line fields or ``arrow-less vectors fields''
--- i.e., maps from the physical domain to the real projective plane $\PR$,
which is obtained by identifying antipodal pairs of points in the unit sphere~$\SS^2$.
In the liquid crystal literature, $\PR$ is usually embedded in a space of matrices, i.e.
$\PR$ is identified with the set of matrices~$\Q\in\R^{3\times 3}$ that can be written in the form
\begin{equation} \label{PR}
 \Q = \n\otimes\n - \frac{\mathbf{I}}{3} \qquad
 \textrm{for some } \n\in\SS^2,
\end{equation}
where~$(\n\otimes\n)_{ij} := n_in_j$ for~$i, \, j\in\{1, \, 2, \, 3\}$
and~$\mathbf{I}$ is the $(3\times 3)$-identity matrix.
A simple free energy functional, in non-dimensional form, is
\begin{equation} \label{quadr-en}
 I_2(\Q) := \frac{k}{2}\int_{\Omega} \abs{\nabla\Q}^2,
\end{equation}
where~$\Omega\subseteq\R^3$ is the domain, $k>0$ is a material-dependent parameter
and~$|\nabla\Q|^2:=\partial_k\ Q_{ij}\partial_k Q_{ij}$ is an elastic energy density
that penalises spacial inhomogeneities (Einstein's summation convention is assumed).
The corresponding space of finite-energy configurations is 
\begin{equation} \label{Sobolev}
 W^{1,2}(\Omega, \, \PR) := \left\{\Q\in W^{1,2}(\Omega, \, \R^{3\times 3})
 \colon \Q(x)\in\PR \textrm{ for a.e. } x\in\Omega\right\} \! .
\end{equation}
While this simple model is effective
in describing point defects~\cite{BrezisCoronLieb}, configurations that have a 
line discontinuity cost an infinite energy, and therefore are excluded from the theory.
Moreover, if~$\Omega$ is a simply connected, bounded, smooth domain then
any map~$\Q\in W^{1,2}(\Omega, \PR)$ can be \emph{oriented}; that is, there exists a vector
field~$\n$ \emph{as regular as~$\Q$} (i.e. $\n\in W^{1,2}(\Omega, \, \SS^2)$)
such that the representation~\eqref{PR} holds
pointwise a.e.~on~$\Omega$~\cite{BethuelChiron, BallZarnescu}. 
This result usefully allows to reformulate the problem in terms of unit vectors, 
instead of projective-valued maps; but, on the other hand, it implies 
that non-orientable configurations 
(such as the ``$+1/2$ or~$-1/2$ disclination lines'' which are frequently
observed in nematic liquid crystals, see Figure~\ref{fig:defects}) are not admissible.

\begin{figure}[t]
 \centering
 \includegraphics[height=.19\textheight]{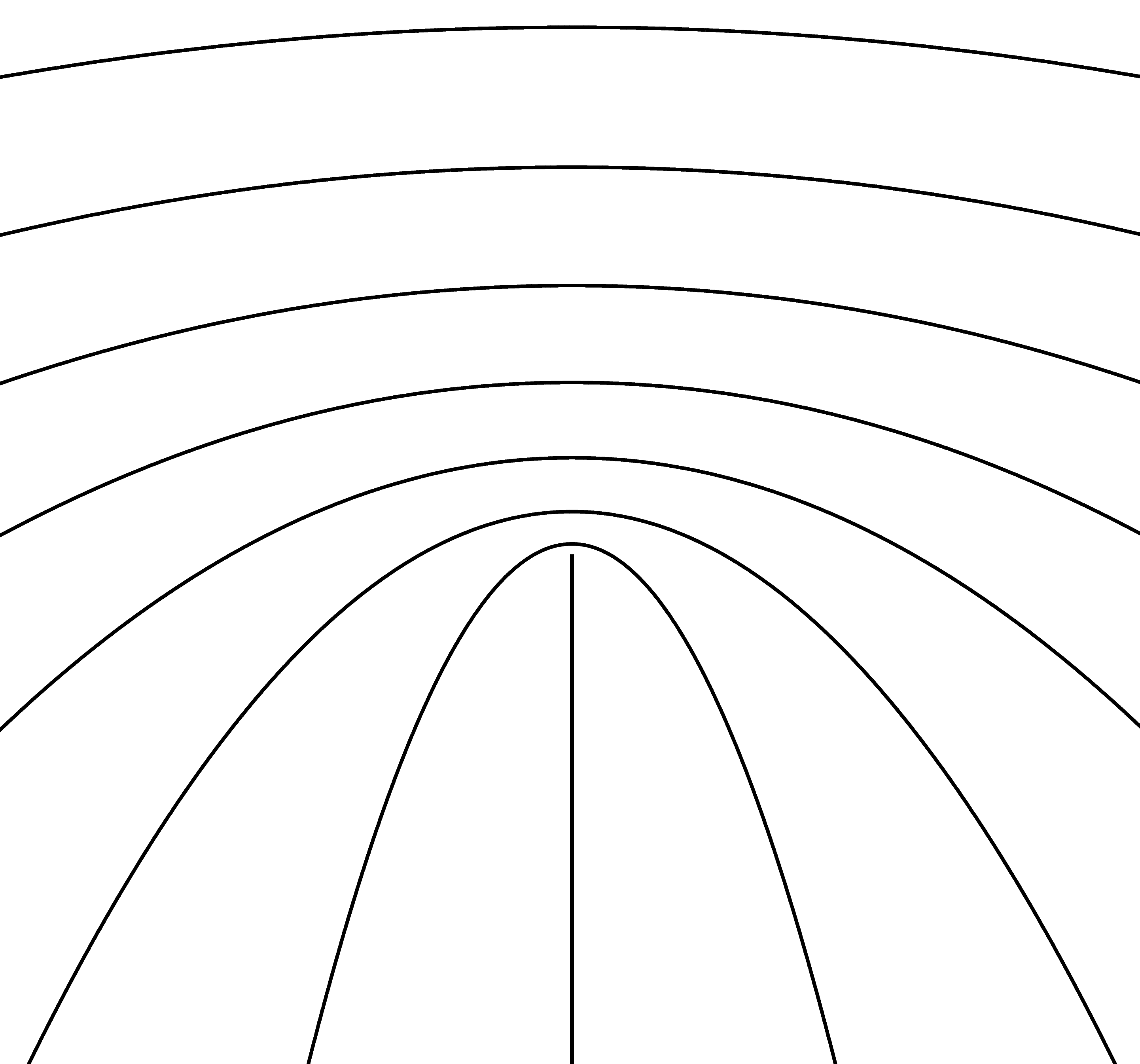}  \hspace{2.5cm}
 \includegraphics[height=.19\textheight]{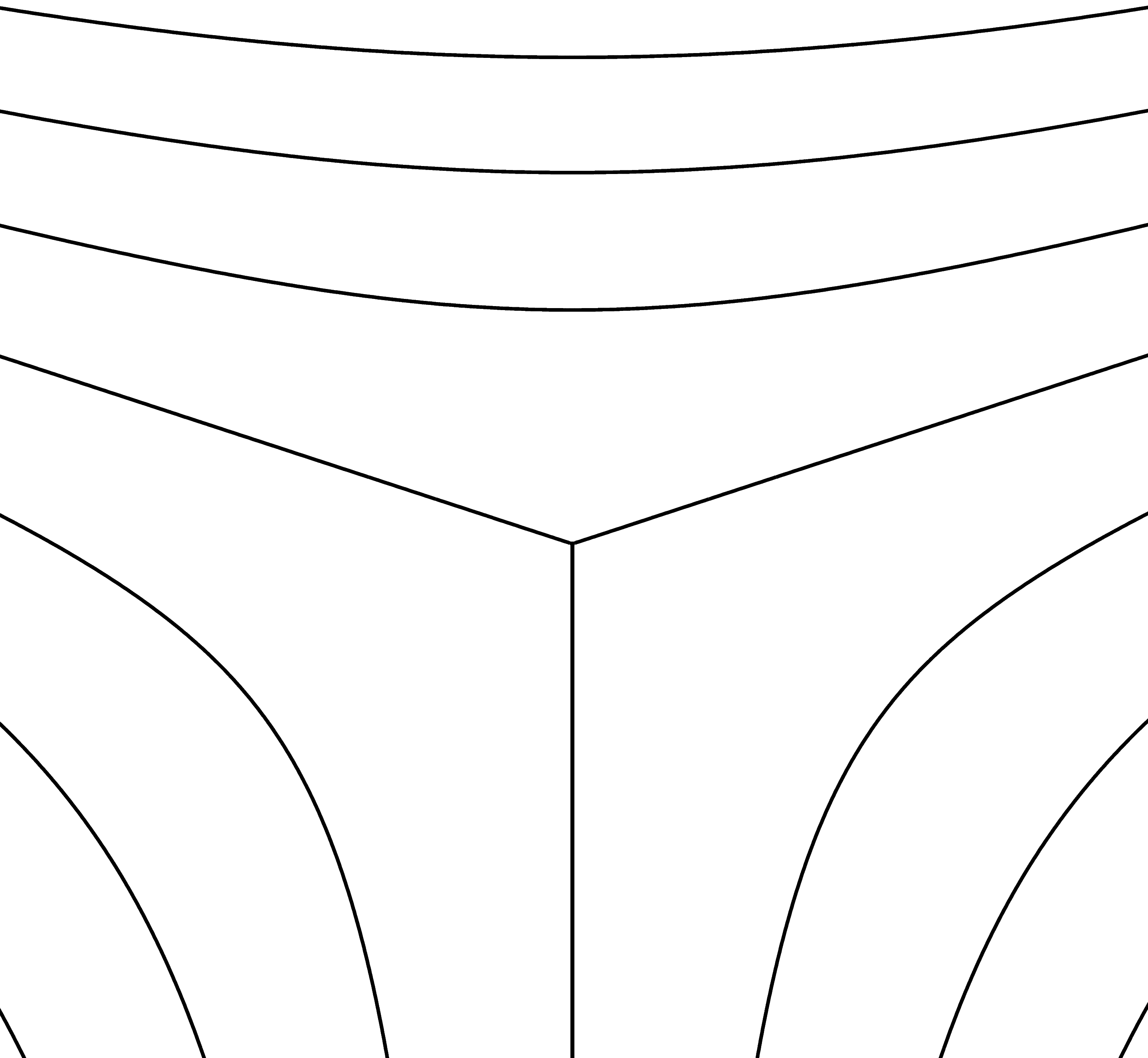}
 \caption{Non-orientable line defects, in cross section.
 The lines are tangent to the preferred orientation of the molecules, at each point.
 Left: a $+1/2$ disclination line; right: a $-1/2$ disclination line. }
 \label{fig:defects}
\end{figure}

A possible remedy for this deficiency is to replace the sharp constraint
that $\Q(x)\in\PR$ for a.e.~$x$ with a penalisation term in the free energy functional, 
i.e. to consider
\begin{equation} \label{LdG}
 I_{\mathrm{LdG}}(\Q) := \int_{\Omega} \left(
 \frac{1}{2}\abs{\nabla\Q}^2 + \frac{1}{\varepsilon^2} f_{\mathrm{LdG}}(\Q)\right)
\end{equation}
where~$\varepsilon>0$ is a (possibly small) parameter and~$f_{\mathrm{LdG}}$ is a
suitable potential that is minimised on~$\PR$.
The functional~\eqref{LdG} is known as the Landau-de Gennes model
(in the so-called one-constant approximation).
It has received great attention from the mathematical community in the last decade (see e.g. 
\cite{MajumdarZarnescu, Lamy-Hedgehog, DiFratta, INSZ-Hedgehog, INSZ-Instability, HenaoMajumdarPisante,
ContrerasLamy, AlamaBronsardLamy} and the references therein).
In the limit~$\varepsilon\to 0$, which is referred to as the limit of small 
nematic correlation length, one recovers the model~\eqref{quadr-en}, at least formally; 
rigorous statements can be found in~\cite{MajumdarZarnescu, GolovatyMontero, pirla, pirla3, ContrerasLamy-Convergence}.

An alternative approach is to perturb the elastic energy density.
While a quadratic elastic energy density is a reasonable modellistic 
assumption for small or moderate values of~$|\nabla\Q|$, there is no indication 
that the energy density should behave quadratically for large values of~$|\nabla\Q|$
and, indeed, functionals with sub-quadratic growth have been recently proposed as models
for nematic liquid crystals~\cite{BallBedford}. 
For illustration purposes, we consider here the functional
\begin{equation} \label{energy-p}
 I_p(\Q) := \frac{1}{p}\int_{\Omega} \abs{\nabla\Q}^p
 \qquad \textrm{for } 1 < p < 2,
\end{equation}
on the space~$W^{1,p}(\Omega, \, \PR)$ which is defined 
analogously to~\eqref{Sobolev}.
(Our results actually apply to a larger class of target manifolds and 
energy densities, see Section~\ref{sect:partial_regularity}.) Now 
configurations with line discontinuities, even non-orientable ones,
do belong to the admissible class. As in the quadratic case, the model~\eqref{energy-p}
can be obtained from a functional of the Landau-de Gennes type,
in the limit of small nematic correlation length \cite{GAB}.

Let us consider a minimiser~$\Q_0$ of~\eqref{energy-p} on a bounded,
smooth domain~$\Omega\subseteq\R^3$, subject to the boundary condition
\begin{equation} \label{BC}
 \Q = \Q_{\mathrm{bd}} \qquad 
 \textrm{on } \partial\Omega \textrm{ (in the sense of traces),}
\end{equation}
where~$\Q_{\mathrm{bd}}\in W^{1-1/p,p}(\partial\Omega, \, \PR)$ is given.
Thanks to \cite[Theorem~6.2]{HardtLin}, in case~$1 < p < 2$ there exist
maps~$\Q\in W^{1,p}(\Omega, \, \PR)$ that satisfy~\eqref{BC}; the existence of a minimiser
can then be proved by standard methods. The regularity theory for minimising $p$-harmonic 
maps~\cite{HardtLin, Luckhaus-PartialReg} implies that $\Q_0$ is locally
of class $C^{1, \alpha}$ out of the set
\begin{equation*}
 S(\Q_0) := \left\{x\in\Omega\colon \liminf_{\rho\to 0} \rho^{p-3}
 \int_{B_\rho(x)}|\nabla\Q_0|^p>0\right\} \! ,
\end{equation*}
and moreover, $S(\Q_0)$ is a relatively closed set of Hausdorff dimension at most~$1$.
Our main result provides additional information on the structure of the set~$S(\Q_0)$.
We denote by $\dim_{\H}(E)$ the Hausdorff dimension of a Borel subset $E\subseteq\R^3$.

\begin{theorem} \label{th:main}
 There exists a relatively closed set $\Sigma\subseteq S(\Q_0)$
 that satisfies the following properties:
 \begin{enumerate}[label=(\roman*)]
  \item for any ball~$B$ that intersects $\Sigma$, there holds
  $\dim_{\H}(\Sigma\cap B)=1$;
  \item for any ball~$B\csubset\Omega\setminus\Sigma$, the set
  $S(\Q_0)\cap B$ is finite.
 \end{enumerate}
\end{theorem}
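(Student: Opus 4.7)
The strategy relies on the double cover $\SS^2 \to \PR$ and defines $\Sigma$ as the obstruction set to local orientability:
\[
 \Sigma := \bigl\{x_0\in\Omega : \Q_0 \text{ admits no } W^{1,p}(B_\rho(x_0),\SS^2)\text{-lift for any } \rho>0\bigr\}.
\]
The set $\Sigma$ is relatively closed in $\Omega$, since if $\Q_0$ lifts on some ball around $x$, then any point sufficiently close to $x$ inherits a lift on a smaller concentric ball. Moreover $\Sigma\subseteq S(\Q_0)$: on any ball where $\Q_0$ is $C^{1,\alpha}$, a classical smooth lift exists.

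For item~(ii), given $B\csubset\Omega\setminus\Sigma$, each $x\in\overline{B}$ lies in a ball $B_{\rho(x)}(x)$ on which $\Q_0$ admits a lift $\n_x\in W^{1,p}(B_{\rho(x)}(x),\SS^2)$; compactness extracts a finite subcover. The pointwise identity $|\nabla\Q_0|^2 = 2|\nabla\n_x|^2$, valid for any such lift, together with the observation that any competitor $\tilde\n$ of $\n_x$ induces an admissible competitor $\tilde\n\otimes\tilde\n-\mathbf{I}/3$ for $\Q_0$, shows that each $\n_x$ is a minimising $p$-harmonic map into $\SS^2$. Since $\SS^2$ is simply connected, the standard dimension-reduction argument excludes any positive-dimensional singular set: a minimising tangent map from $\R^2$ to $\SS^2$ restricts to a $p$-harmonic geodesic $\SS^1\to\SS^2$, which by triviality of $\pi_1(\SS^2)$ and minimality must be constant. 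Hence each $S(\n_x)$ is locally discrete, and $S(\Q_0)\cap B$ is finite.

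For item~(i) the upper bound $\dim_{\H}(\Sigma\cap B)\le 1$ is inherited from $\Sigma\subseteq S(\Q_0)$ and the Hardt--Lin bound. For the lower bound, I would associate to $\Q_0$, on its smooth set, a $\Z/2$-valued cohomology class encoding the orientability monodromy, and realise its Poincar\'e dual as a rectifiable flat $1$-chain $T$ with $\Z/2$-coefficients supported in $\overline{S(\Q_0)}$, in the spirit of the Alberti--Baldo--Orlandi and Pakzad--Rivi\`ere constructions but adapted to $\Z/2$-coefficients. If $x_0\in\Sigma$, then on every small $2$-sphere about $x_0$ the restriction of $\Q_0$ is non-orientable, so $T$ has non-trivial $\Z/2$-flux across that sphere; consequently $\spt T$ meets every neighbourhood of $x_0$ in a set of positive $\H^1$-measure, and combined with the upper bound this forces $\dim_{\H}(\Sigma\cap B)=1$ whenever the intersection is non-empty.

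The principal obstacle is the lower-bound construction, i.e. realising $\Sigma$ as the support of a genuine $\Z/2$-rectifiable $1$-chain. This demands a careful slicing of $\Q_0$ over $2$-spheres, a well-posed $\Z/2$-valued orientability index that is locally constant off a thin set, and a use of $p$-energy monotonicity together with strong compactness of minimisers to prevent the defect from concentrating at isolated points (which would collide with item~(ii)). The subquadratic regime $p<2$ is essential throughout: it is precisely the regime in which non-orientable $\PR$-valued $W^{1,p}$ maps are admissible, and in which the line defects populating $\Sigma$ carry finite $p$-energy.
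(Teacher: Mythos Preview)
Your outline is essentially the paper's proof. The paper deduces Theorem~\ref{th:main} from a more general Theorem~3.3, but the strategy is the same: define $\Sigma$ as the obstruction set to local lifting (the paper takes $\Sigma := \spt\S^{\mathrm{PR}}(\Q_0)$, the support of a $\pi_1(\NN)$-valued flat $(d-k)$-chain constructed in the companion paper~[CO1], which coincides with your non-liftability set), lift on balls $B\csubset\Omega\setminus\Sigma$ to a minimising $\SS^2$-valued map, and invoke the discreteness of the singular set for sphere-valued minimisers via the tangent-map/dimension-reduction argument you describe.

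The one place where the paper's route is cleaner than your sketch is the lower bound in~(i). You phrase it as ``on every small $2$-sphere about $x_0$ the restriction of $\Q_0$ is non-orientable, so $T$ has non-trivial $\Z/2$-flux''; making this precise for $W^{1,p}$ with $p<2$ is delicate because traces on spheres are only in $W^{1-1/p,p}$. The paper sidesteps this by appealing directly to White's deformation theorem: once $\S^{\mathrm{PR}}(\Q_0)\mres B'\neq 0$ as a flat chain, one gets $\H^{d-2}(\spt\S^{\mathrm{PR}}(\Q_0)\cap B')>0$ without any slicing. Your citation of Pakzad--Rivi\`ere for the flat-chain construction is exactly right; for the specific case $d=3$, $1<p<2$ (so $\lfloor p\rfloor=1$) their result already suffices, and the paper's recourse to~[CO1] is only needed for the more general Theorem~2.1.
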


In other words, the singular set~$S(\Q_0)$ splits into a $1$-dimensional 
part and a $0$-dimensional part, with the physical interpretation of 
being line defects and point defects, respectively. 
As it will appear from the proof, the map~$\Q_0$ has a 
\emph{non-orientable} singularity at~$\Sigma$: 
more precisely, if~$B\subseteq\Omega$ is an open ball that intersects~$\Sigma$,
then there is \emph{no} vector field~$\n\in W^{1,p}(B, \, \SS^2)$
such that~$\Q_0 = \n\otimes\n - \textbf{I}/3$ 
a.e. on~$B$ (see Definition~\ref{def:Sigmalift} below).
In particular, minimisers of~\eqref{energy-p} cannot have orientable line singularities.
This is consistent with the ``escape in the third dimension''
proposed by Cladis and Kl\'eman~\cite{CladisKleman}, which allows to 
de-singularise an orientable line defect via local surgery.
However, for non-minimising, stationary points of~\eqref{energy-p}
the situation is different, as they might have orientable line singularities as
well as non-orientable ones (see Remark~\ref{remark:stationary}).

Stratification results for the singular
set of minimising harmonic and $p$-harmonic maps are already 
available in the literature (see
e.g.~\cite{SchoenUhlenbeck, Simon-Harmonic, White-Stratification, CheegerNaber}).
In these results, the strata of the singular set are 
defined in terms of minimising tangent maps 
(that is, the blow ups of harmonic maps at a singular point)
and their regularity, or symmetry, properties.
The Hausdorff dimension of each stratum is bounded from above,
by means of Federer's ``dimension reduction'' argument. 
In Theorem~\ref{th:main}, by contrast, the set~$\Sigma$ 
is defined in terms of a topological property --- that is, 
the obstruction to local approximability by smooth maps.
This topological property, via White's results in 
geometric measure theory~\cite{White-Deformation}, implies
a \emph{lower} dimension bound on~$\Sigma$, which
is valid for any~$\Q\in W^{1,p}(\Omega, \, \PR)$ 
(see Theorem~\ref{th:partialdensity} below).
In the particular case~$\Q = \Q_0$ is a minimising $p$-harmonic map,
the topological lower bound on the dimension on~$\Sigma$
matches the upper bound given by the dimension reduction argument,
thus proving that~$\Sigma$ has Hausdorff dimension equal to one.

The key point in the proof of Theorem~\ref{th:main} is to approximate $\Q_0$
with smooth, $\PR$-valued maps. It is in general impossible to do so globally,
as smooth maps are \emph{not} sequentially dense in $W^{1,p}(\Omega, \, \PR)$
when~$1 < p < 2$, not even weakly, and not even when~$\Omega$ is a ball
(see e.g. \cite[Theorem~II]{BethuelZheng} and \cite[Theorem~3]{Bethuel-Density}).
Density of smooth maps is prevented by topological obstructions. 
However, to some extent it is possible to ``localise'' such topological obstructions;
that is, we can find a set~$\Sigma$ (whose dimension is bounded from below) such that, away from~$\Sigma$,
$\Q_0$ can locally be approximated by smooth maps.
This allows to define a local orientation for~$\Q_0$ away from~$\Sigma$ 
and hence, to reformulate locally the problem
in terms of unit vectors. 
This ``local density'' result (Theorem~\ref{th:partialdensity}) is 
obtained in Section~\ref{sect:local_density}. Similar results can be deduced
from an earlier work by Pakzad and Rivi\`ere~\cite{PakzadRiviere}, but the statement
we present here is more general and based on a different construction~\cite{CO1}.

The paper is organised as follows. Section~\ref{sect:local_density} is devoted to
the local approximability of manifold-valued maps with smooth maps. The main result of the section,
Theorem~\ref{th:partialdensity}, applies to a large class of target manifolds
and to a wider range of Sobolev exponents~$p$. We also discuss its application
to the lifting problem (Corollary~\ref{cor:partiallifting}). Section~\ref{sect:partial_regularity}
contains the proof of Theorem~\ref{th:main}. Again, the analysis is carried out in a 
broader setting than the one previously discussed; the main result of the section,
Theorem~\ref{th:stratification}, applies to target manifolds that are quotients 
of spheres and slightly more general energy functionals.

\numberwithin{equation}{section}
\numberwithin{definition}{section}
\numberwithin{theorem}{section}

\section{Local approximability by smooth maps}
\label{sect:local_density}

Let~$m\geq 2$, $k\geq 2$ be integers, and let~$\NN\subseteq\R^m$ 
be a smoothly embedded manifold without boundary.
We make the following assumption on~$\NN$ and~$k$:
\begin{enumerate}[label=(H), ref=\textrm{H}]
 \item \label{H} $\NN$ is compact and $(k-2)$-connected,
 that is $\pi_0(\NN) = \pi_{1}(\NN) = \ldots = \pi_{k-2}(\NN) = 0$. In case~$k=2$,
 we also assume that~$\pi_1(\NN)$ is abelian.
\end{enumerate}
For instance, this assumption is satisfied if~$\NN\simeq\PR$ (the case we considered 
in Section~\ref{sect:intro}) and~$k=2$,  because the fundamental
group of~$\PR$ is the group with two elements, $\pi_1(\PR)\simeq\Z/2\Z$.
The assumption~\eqref{H} has implications on the behaviour of 
$\NN$-valued maps and their singularities.
An arbitrary map~$\R^d\to\NN$ may be discontinuous 
on arbitrarily large sets. However, if the assumption~\eqref{H}
is in force, the singularities that occur on sets of
dimension~$d - k + 1$ or above carry no topological obstruction;
they can be ``smoothened out'' by local surgery. 
By contrast, singularities of dimension~$d-k$
may carry topological obstructions, which 
prevent approximability by smooth, $\NN$-valued maps.
This motivates the following

\begin{definition} \label{def:Sigma}
 Given a domain~$\Omega\subseteq\R^d$, a number~$p\in [1, \, +\infty)$
 and a map~$u\in W^{1,p}(\Omega, \, \NN)$, we define the set~$\Sigma(u)$
 as follows: $x\notin\Sigma(u)$ if and only if
 $x\notin\Omega$, or there exists
 an open ball~$B\subseteq\Omega$, $B\ni x$ such 
 that~$u_{|B}$ belongs to the strong~$W^{1,p}$-closure of $C^\infty(B, \, \NN)$.
\end{definition}

In other words, $\Sigma(u)$ is the set of points~$x\in\Omega$ such that
$u$ is \emph{not} approximable by smooth $\NN$-valued maps locally around~$x$,
in the strong~$W^{1,p}$-topology. The set~$\Sigma(u)$ is relatively closed in~$\Omega$,
as an immediate consequence of the definition.

\begin{theorem} \label{th:partialdensity}
 Let~$\NN$ be a smooth manifold, and let~$k\geq 2$ be an integer. We assume 
 that the condition~\eqref{H} is satisfied. Let~$\Omega\subseteq\R^d$ be a domain 
 of dimension~$d\geq k$. Let~$p\in [1, \, k)$ and~$u\in W^{1,p}(\Omega, \, \NN)$.
 Then, for any ball~$B\subseteq\Omega$, either~$\Sigma(u)\cap B = \emptyset$
 or~$\H^{d-k}(\Sigma(u)\cap B)>0$.
\end{theorem}

Condition~(i) implies that any connected component~$\Sigma^\prime$ of~$\Sigma(u)$
satisfies $\dim_{\H}\Sigma^\prime\geq d-k$. 
In general, there is no non-trivial upper bound on the dimension of~$\Sigma(u)$
and it may happen that~$\Sigma(u) = \Omega$ (see Remark \ref{remark:Riviere}).

When the Sobolev exponent satisfies~$1\leq p < k-1$, Theorem~\ref{th:partialdensity}
follows immediately from Bethuel's result~\cite{Bethuel-Density} 
and the assumption~\eqref{H}; in this case, we always have~$\Sigma(u) = \emptyset$.
Moreover, if $p$ satisfies the additional assumption $\lfloor p\rfloor\in\{1, \, d-1\}$,
Theorem~\ref{th:partialdensity} follows from earlier results by 
Pakzad and Rivi\`ere \cite[Theorem~II]{PakzadRiviere},
combined with White's deformation theorem \cite{White-Deformation}.
We are able to remove this technical restriction here 
because we appeal to the results in~\cite{CO1}, which are based on a different construction
than Pakzad and Rivi\`ere's one. However, the case~$p\geq k$ is not
covered by Theorem~\ref{th:partialdensity},
because the construction carried over in~\cite{CO1} breaks down in this case.

We also present an application of Theorem~\ref{th:partialdensity}
to the lifting problem. Let~$\pi\colon\EE\to\NN$ be the universal covering map of~$\NN$. 
We endow~$\EE$ with a pull-back Riemannian metric,  
in such a way that $\pi$ is a local isometry, and choose 
an isometric embedding of~$\EE$ 
as a closed subset of a Euclidean space~$\R^\ell$.
The manifold~$\EE$ need not be compact but it is smooth
and complete, and hence
such an embedding exists by Nash theorem~\cite{Nash56, Muller-ClosedEmbeddings}.
We say that $v\in W^{1,p}(\Omega, \, \EE)$
is a lifting of~$u$ if $u = \pi\circ v$ a.e. on~$\Omega$. In case~$\NN\simeq\PR$, 
we have~$\EE\simeq\SS^2$ and, identifying~$\NN$ with its embedding given by~\eqref{PR},
the map~$\pi\colon\SS^2\to\PR$ can be written as
\[
 \pi(\n) := \n\otimes\n - \frac{\mathbf{I}}{3} 
 \qquad \textrm{for any } \n\in\SS^2.
\]
Therefore, orienting a projective-valued map~$u$ amounts to finding a lifting of~$u$.
The existence of a (globally defined) lifting for maps $u\in W^{1, p}(\Omega, \, \NN)$
was studied by Bethuel and Chiron~\cite{BethuelChiron}
(and independently, in~\cite{BallZarnescu, Mucci-DCDS}
in case~$\NN=\R\mathrm{P}^2$). When~$p\geq 2$ and~$\Omega$ is simply connected, 
any~$u\in W^{1, p}(\Omega, \, \NN)$ has a lifting~$v\in W^{1, p}(\Omega, \, \EE)$;
however, for~$p<2$ there are examples of maps~$u\in W^{1,p}(\Omega, \, \NN)$
with no lifting in~$W^{1,p}$. The existence of liftings has been investigated
in different functional spaces, too.
The lifting problem in~$W^{s, p}(\Omega, \, \SS^1)$
was extensively studied by Bourgain, Brezis, and Mironescu,
see e.g.~\cite{BourgainBrezisMironescu, BourgainBrezisMironescu2005},
but results for more general targets~$\NN$ are also 
available~\cite{BethuelChiron, MironescuVanSchaftingen}.
The counterpart for Besov spaces has been considered in~\cite{MironescuRussSire}.
In the setting of BV-spaces, we refer to~\cite{GiaquintaModicaSoucek-II,DavilaIgnat,Ignat-Lifting} 
for the case $\NN=\SS^1$, 
to~\cite{IgnatLamy} for the case~$\NN=\R\mathrm{P}^q$,
and to~\cite{CO-Lifting} for general closed manifolds~$\NN$.

\begin{definition} \label{def:Sigmalift}
 Given a domain~$\Omega\subseteq\R^d$, a number~$p\in [1, \, +\infty)$
 and a map~$u\in W^{1,p}(\Omega, \, \NN)$, we define the set~$\Sigma_{\mathrm{lift}}(u)$
 as follows: $x\notin\Sigma_{\mathrm{lift}}(u)$ if and only if $x\notin\Omega$, 
 or there exists an open ball~$B\subseteq\Omega$, $B\ni x$,
 such that~$u_{|B}$ has a lifting~$v\in W^{1,p}(B, \, \EE)$.
\end{definition}

The definition immediately implies that
$\Sigma_{\mathrm{lift}}(u)$ is relatively closed in~$\Omega$.
If~$p\geq 2$, we have~$\Sigma_{\mathrm{lift}}(u) = \emptyset$ \cite{BethuelChiron}.

\begin{corollary} \label{cor:partiallifting}
 Let~$\NN\subseteq\R^m$ be a compact, smooth, connected manifold with abelian~$\pi_1(\NN)$.
 Let~$\Omega\subseteq\R^d$ be a domain of dimension~$d\geq 2$. 
 Let~$p\in [1, \, 2)$ and~$u\in W^{1,p}(\Omega, \, \NN)$. Then,
 \[
  \Sigma_{\mathrm{lift}}(u) = \Sigma(u).
 \]
 In particular, for any ball~$B\subseteq\Omega$, either
 $\Sigma_{\mathrm{lift}}(u)\cap B = \emptyset$ 
 or~$\H^{d-2}(\Sigma_{\mathrm{lift}}(u)\cap B)>0$.
\end{corollary}

The proof of Theorem~\ref{th:partialdensity} and Corollary~\ref{cor:partiallifting}
will be carried out in the setting of flat chains with coefficients 
in a normed abelian group~$(\G, \, |\cdot|)$. We follow here the notation adopted 
in~\cite[Section~2]{CO1}. In particular, we will use additive notation for the
group operation on~$\G$. We denote by $\F_n(\R^d; \, \G)$ 
(resp., $\F_n(\Omega; \, \G)$) the space of flat $n$-chains in~$\R^d$ 
(resp., in the domain $\Omega\subseteq\R^d$) with coefficients in the group~$\G$.
The support of a flat chain is denoted by~$\spt A$.
The restriction of a flat chain~$A$ to a Borel set~$E\subseteq\R^d$
will be denoted $A\mres E$. The reader is referred to \cite{Fleming, White-Rectifiability} 
for more details about flat chains. 

\begin{proof}[Proof of Theorem~\ref{th:partialdensity}]
 As noted above, the case~$p\in [1, \, k-1)$ is an immediate consequence of
 the results in~\cite{Bethuel-Density}, so we can assume w.l.o.g. that $k-1\leq p <k$.
 We can also assume that $\Omega$ is bounded and smooth (if not, we cover $\Omega$ 
 with countably many bounded, smooth subdomains). 
 As a consequence of the assumption~\eqref{H}, the group~$\pi_{k-1}(\NN)$ is abelian,
 finitely generated and can be endowed with a norm that satisfies
 \begin{equation*} 
  \inf_{g\in\G\setminus\{0\}} |g| > 0.
 \end{equation*}
 (see e.g. \cite[Section~2.2]{CO1}). 
 In~\cite[Theorem~1]{CO1}, the authors constructed a continuous operator
 \begin{equation} \label{Stop}
  \S^{\mathrm{PR}}\colon W^{1, p}(\Omega, \, \NN)\to \F_{d-k}(\Omega; \, \pi_{k-1}(\NN))
 \end{equation}
 with the property that, if $\Omega$ is a ball, then $\S^{\mathrm{PR}}(u) = 0$
 if and only if $u$ is a strong $W^{1,p}$-limit of smooth maps~$\Omega\to\NN$.
 Such an operator was previously constructed by Pakzad and Rivi\`ere 
 in case $\lfloor p\rfloor \in\{1, \, d-1\}$. 
 In particular, for any~$x\in\Omega$ we have
 \begin{equation} \label{S-Sigma}
  x\notin\Sigma(u) \quad \Longleftrightarrow \quad
  \textrm{there exists a ball } B\subseteq\Omega, B\ni x
  \textrm{ such that } \S^{\mathrm{PR}}(u_{|B}) = 0
 \end{equation}
 On the other hand, for any ball~$B\subseteq\Omega$,
 the chain~$\S^{\mathrm{PR}}(u_{|B})$ is indeed the
 restriction of $\S^{\mathrm{PR}}(u)$
 to~$B$ (in a suitable sense; see \cite[Corollary~3.3]{CO1}).
 Therefore, \eqref{S-Sigma} implies
 \begin{equation} \label{Sigma}
  \Sigma(u) = \spt\S^{\mathrm{PR}}(u).
 \end{equation}
 Now, if $B$ is a ball such that~$B\cap\spt\S^{\mathrm{PR}}(u)\neq\emptyset$
 then, possibly by taking a smaller ball~$B^\prime\csubset B$,
 the restriction $\S^{\mathrm{PR}}(u)\mres B^\prime$ is well-defined
 and~$\S^{\mathrm{PR}}(u)\mres B^\prime\neq 0$.
 (The restriction $S\mres B_r(x_0)$ need not be well-defined for any~$r>0$
 if $S$ has infinite mass, but it is still well-defined for a.e.~$r>0$;
 see e.g. \cite[Theorem~5.2.3.(2)]{DePauwHardt}).
 Then, \cite[Theorem~3.1]{White-Deformation} implies that
 $\H^{d-k}(\Sigma(u)\cap B^\prime) =
  \H^{d-k}(\spt\S^{\mathrm{PR}}(u)\cap B^\prime) > 0$,
 and the theorem follows.
\end{proof}

\begin{proof}[Proof of Corollary~\ref{cor:partiallifting}]
 Let~$p\in [1, \, 2)$, $u\in W^{1, p}(\Omega, \, \NN)$.
 We only need to prove that~$\Sigma_{\mathrm{lift}}(u) = \Sigma(u)$;
 then, the corollary will follow by applying Theorem~\ref{th:partialdensity}
 with the choice~$k=2$.
 Let~$B\subseteq\Omega$ be a ball, and let~$(u_j)_{j\in\N}$
 be a sequence of smooth maps~$B\to\NN$ that converge to~$u_{|B}$ strongly in~$W^{1,p}$.
 Standard topological results 
 (see e.g. \cite[Proposition~1.33]{Hatcher})
 imply that each~$u_j$ admits a
 smooth lifting $v_j\colon B\to\EE$. Moreover, the chain rule implies
 $|\nabla v_j| = |\nabla u_j|$ pointwise on~$B$, because~$\pi$ is a local isometry.
 Thus, up to subsequences, the $v_j$'s converge $W^{1,p}$-weakly and a.e.~to a 
 limit $v\in W^{1,p}(B, \, \NN)$, and~$v$ must be a lifting for~$u$.
 This shows~$\Sigma_{\mathrm{lift}}(u)\subseteq\Sigma(u)$.
 Conversely, suppose that $u_{|B}$ has a lifting~$v\in W^{1,p}(B, \, \EE)$.
 The universal covering~$\EE$ is simply connected and hence,
 $v$ is a strong~$W^{1,p}$-limit of smooth maps~$B\to\EE$.
 This follows from~\cite{Bethuel-Density} in case~$\EE$ is compact,
 and from~\cite[Corollaries~1.1 and~1.2]{BousquetPonceVanSchaftingen-Noncompact} 
 otherwise. By the chain rule and Lebesgue's dominated convergence theorem, 
 $u$ is also a strong $W^{1,p}$-limit of smooth maps~$B\to\NN$.
 Thus, $\Sigma_{\mathrm{lift}}(u)\supseteq\Sigma(u)$ and the proof is complete.
\end{proof}
 
\begin{remark} \label{remark:Riviere}
 As remarked above, it may happen that~$\Sigma(u)=\Omega$; this fact
 is exploited by Rivi\`ere~\cite{Riviere-harmonic_mostro} to construct
 a weakly harmonic map that is discontinuous on a dense set.
 Consider, for instance,
 the case $\Omega = B^3$ is the unit ball in~$\R^3$, $\NN$ is the unit sphere~$\SS^2\subseteq\R^3$,
 $k=3$ (the assumption~\eqref{H} is then satisfied, because~$\SS^2$ is simply connected) and~$p=2$.
 In this case, $\Sigma(u)$ is the support of the distributional Jacobian, which is 
 defined for any~$u\in (L^\infty\cap W^{1,2})(B^3, \, \R^3)$ as
 \begin{equation} \label{Jac}
   \J u = \frac{1}{3} \div (u\cdot\partial_2u\times\partial_3u, 
  \, u\cdot\partial_3u\times\partial_1u, \, u\cdot\partial_1u\times\partial_2u)
  \qquad \textrm{in the sense of } \mathscr{D}^\prime(B^3).
 \end{equation}
 Indeed, $u$ is a strong $W^{1,2}$-limit of smooth maps~$B^3\to\SS^2$
 if and only if~$\J u = 0$ on~$B^3$ \cite{Bethuel1990}.
 Let~$(Q_j)_{j\geq 1}$ be an enumeration of the points of $B^3$ with rational
 coordinates, and let $P_j := Q_j + (2^{-j}\pi, \, 0, \, \ldots, \, 0)$.
 Let~$B^\prime$ be a slightly larger ball, that contains all the $P_j$'s.
 By applying 
 \cite[Lemma~A2]{Riviere-harmonic_mostro} repeatedly, 
 we can construct a sequence of maps~$u_n\in H^1(B^\prime, \, \SS^2)$ such that
 $u_n$ is smooth on~$B^\prime\setminus\{P_j, \, Q_j\}_{j=1}^n$, 
 $\J u = \frac{4\pi}{3}\sum_{i=1} (\delta_{P_j} - \delta_{Q_j})$ and
 \[
   \norm{\nabla u_{n} - \nabla u_{n-1}}^2_{L^2(B^\prime)} 
   \leq C \abs{P_{n} - Q_{n}} \leq 2^{-n}\pi C,
 \]
 where~$C$ is an $n$-independent constant. In particular, 
 the sequence $(u_n)_{n\in\N}$ is Cauchy and it converges
 $W^{1,2}$-strongly to a limit~$u\in W^{1,2}(B^\prime, \, \SS^2)$.
 Since the distributional Jacobian is continuous with respect to the strong~$W^{1,2}$-convergence
 (this is readily checked using~\eqref{Jac}), we have $\J u = \frac{4\pi}{3}\sum_{j\geq 1}(P_j - Q_j)$
 and, noting that $Q_i \neq P_j$ for any~$(i, \, j)$, we conclude that
 $\Sigma(u) = \spt\J u \supseteq B^3$.
\end{remark}

\section{Partial regularity of minimising $\phi$-harmonic maps with values in quotients of spheres}
\label{sect:partial_regularity}

In this section, we give an application of Theorem~\ref{th:partialdensity}
to the partial regularity of minimisers for a manifold-constrained
variational problem, as anticipated in the introduction.
However, we consider a slightly more general setting than the one 
presented in Section~\ref{sect:intro}.

We consider the unit sphere~$\SS^q$ of dimension~$q\geq 2$.
Let~$\G$ be a finite, abelian group of isometries acting on~$\SS^q$.
We assume that the action is free, that is, $g(x)\neq x$ for any~$x\in\SS^q$ 
and any transformation~$g$ in~$\G$ other than the identity.
Then, the quotient space~$\NN := \SS^q/\G$ can naturally be given the structure 
of a smooth Riemannian manifold, with fundamental group $\pi_1(\NN)\simeq\G$
and universal covering~$\SS^q$ (see e.g.~\cite[Proposition~2.32]{Lee-Riemannian}, 
\cite[Proposition~1.40 p.~72]{Hatcher}).
The assumption~\eqref{H} is satisfied for~$k=2$.
By applying Nash isometric embedding theorem, we identify~$\NN$ with a submanifold 
of a Euclidean space~$\R^m$. If~$q=2$ and the group~$\G$ consists of the identity 
and of the map~$x\mapsto -x$, we have~$\NN=\PR$ and so we recover the case
presented in Section~\ref{sect:intro}.

\begin{remark} \label{remark:quotient}
 If~$q$ is even, then the only non-trivial group of isometries~$\G$ 
 that acts freely on~$\SS^q$ is~$\Z/2\Z$ and correspondingly,
 $\NN\simeq\R\mathrm{P}^q$. Indeed, $\G$ can be identified with a subgroup of the 
 orthogonal group~$\mathrm{O}(q+1)$, and the determinant defines a homomorphism 
 $\G\to\{1, \, -1\}$. If~$q$ is even, elements of $\mathrm{SO}(q+1)$
 have an eigenvalue~$1$, and because we have assumed that $\G$ acts freely
 on~$\SS^q$, we deduce that $\det(g) = -1$ for 
 any~$g\in\G\setminus\{\Id_{\SS^q}\}$. Therefore, for any 
 $g_1$, $g_2\in\G\setminus\{\Id_{\SS^q}\}$ we have $\det(g_1g_2) = 1$, hence
 $g_1g_2=\Id_{\SS^q}$. This implies that the group $\G$ consists of two elements only.
 However, there are other finite, abelian groups of isometries that act 
 freely on~$\SS^q$ if~$q$ is odd: for example, 
 the group generated by the isometry
 $(x_1, \, x_2, \, x_3, \, x_4, \ldots, \, x_{q}, \, x_{q+1})
 \mapsto (-x_2, \, x_1, \, -x_4, \, x_3, \, \ldots, -x_{q+1}, \, x_q)$, 
 which is cyclic of order four.
\end{remark}

Let~$1 < p < 2$, and let $\phi\colon[0, \, +\infty]\to [0, \, +\infty)$ be a
function (the elastic modulus) that satisfies the following assumptions:
\begin{enumerate}[label=(H\textsubscript{\arabic*}), ref=H\textsubscript{\arabic*}]
 \item \label{hp:C2} \label{hp:first} 
     $\phi\in C^1[0, \, +\infty)\cap C^2(0, \, +\infty)$;
 \item \label{hp:convex}
     $\phi(0)=\phi^\prime(0) = 0$ and $\phi^{\prime\prime}(t)>0$ for any~$t> 0$;
 \item \label{hp:subquadratic} \label{hp:last} the function defined by
     $\psi(t) := t\phi^\prime(t) - p\phi(t)$, for~$t\geq 0$, is bounded.
\end{enumerate}
An example of admissible~$\phi$ is defined by
\[
 \phi(t) := (t^2 + b)^{p/2} - b^{p/2} \qquad \textrm{for } t\geq 0,
\]
where~$b\geq 0$ is a fixed parameter. (In the applications to
liquid crystals modelling, one typically requires~$\phi(t)$ 
to behave quadratically for small~$t$, but this assumption is not needed
in the following analysis.)
As a consequence of \eqref{hp:first}--\eqref{hp:last}, the function
$\phi$ is strictly convex and strictly increasing, and in fact,
there exists a number~$\alpha> 0$ such that
\begin{equation} \label{blowup}
  \phi(t)  = \alpha t^p + \mathrm{O}(1), \quad
  \phi^\prime(t) = \alpha p \, t^{p-1} + \mathrm{O}(t^{-1}) \qquad 
  \textrm{as }t\to+\infty.
\end{equation}
(The proof of this claim, which is based on calculus only, is postponed.)

Now, let~$\Omega\subseteq\R^3$ be a bounded, smooth domain,
and let~$u_{\mathrm{bd}}\in W^{1-1/p, p}(\partial\Omega, \NN)$ be a boundary datum.
We consider the functional
\begin{equation} \label{energy}
 I_{\phi}(u) := \int_{\Omega} \phi(\abs{\nabla u}) ,
\end{equation}
to be minimised in the class
\begin{equation} \label{A}
 \mathscr{A} := \left\{u\in W^{1,p}(\Omega, \, \NN)\colon
 u = u_{\mathrm{bd}} \textrm{ on } \partial\Omega \textrm{ (in the sense of traces)} \right\} \! .
\end{equation}
The class~$\mathscr{A}$ is non-empty due to \cite[Theorem~6.2]{HardtLin},
and standard arguments imply the existence of minimisers for~$I_\phi$ in~$\mathscr{A}$.

Let~$u_0$ be a minimiser for~$I_\phi$ in~$\mathscr{A}$, and 
let~$S(u_0)\subseteq\Omega$ be defined by
\begin{equation} \label{singular}
 S(u_0) := \left\{x\in\Omega\colon \liminf_{\rho\to 0} \rho^{p-3}
 \int_{B_\rho(x)}\phi(|\nabla u_0|)>0\right\} \! .
\end{equation}
Under the assumptions \eqref{hp:first}--\eqref{hp:last}, the set~$S(u_0)$
is relatively closed in~$\Omega$, has Hausdorff dimension less than or equal to~$1$,
and moreover $u_0\in C^{1,\alpha}_{\mathrm{loc}}(\Omega\setminus S(u_0))$
for some~$\alpha\in (0, \, 1)$ (see, e.g., \cite{HardtLin} 
in case~$\phi(t)=t^p$, \cite{Luckhaus-Canberra, Luckhaus-PartialReg},
and \cite[Proposition~5.4]{diestroverJDE} for even more general energy 
densities that do not behave asymptotically as a power).
The main result of this section, which implies Theorem~\ref{th:main}, is the following:


\begin{theorem} \label{th:stratification}
 Suppose that the assumptions \eqref{hp:first}--\eqref{hp:last} hold.
 Then, there exists a relatively closed set $\Sigma\subseteq S(u_0)$
 such that
 \begin{enumerate}[label=(\roman*)]
  \item for any ball~$B$ that intersects $\Sigma$, there holds
  $\dim_{\H}(\Sigma\cap B)=1$;
  \item for any ball~$B\csubset\Omega\setminus\Sigma$, the set
  $S(u_0)\cap B$ is finite.
 \end{enumerate}
\end{theorem}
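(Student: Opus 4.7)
The plan is to apply Theorem~\ref{th:partialdensity} with $d=3$ and $k=2$. Hypothesis~\eqref{H} is satisfied because $\NN=\SS^q/\G$ is compact and connected (since $\SS^q$ is connected) with $\pi_1(\NN)\simeq\G$ abelian. This yields a relatively closed set $\Sigma\subseteq\Omega$ such that $\H^1(\Sigma\cap B)>0$ whenever $B$ meets $\Sigma$, and such that $u_0|_B$ is a strong $W^{1,p}$-limit of smooth $\NN$-valued maps on every ball $B\csubset\Omega\setminus\Sigma$.

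I would first verify $\Sigma\subseteq S(u_0)$. On $\Omega\setminus S(u_0)$ the map $u_0$ is locally $C^{1,\alpha}$, so on any ball $B\csubset\Omega\setminus S(u_0)$ it can be approximated in the strong $W^{1,p}$-topology by smooth $\NN$-valued maps (mollify and compose with the nearest-point retraction onto $\NN$). The locality property~\cite[Corollary~3.3]{CO1} of the flat chain $\S^{\mathrm{PR}}(u_0)$ used to construct $\Sigma$ in the proof of Theorem~\ref{th:partialdensity} then gives $\Sigma\cap B=\emptyset$. Property~(i) follows at once: the inclusion $\Sigma\subseteq S(u_0)$ combined with $\dim_\H S(u_0)\leq 1$ forces $\dim_\H(\Sigma\cap B)\leq 1$, while $\H^1(\Sigma\cap B)>0$ supplies the matching lower bound.

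For property~(ii), fix $B\csubset\Omega\setminus\Sigma$ and apply Corollary~\ref{cor:partiallifting} to obtain a lifting $v\in W^{1,p}(B,\SS^q)$ with $u_0|_B=\pi\circ v$ a.e. Since $\pi\colon\SS^q\to\NN$ is a local isometry, $|\nabla v|=|\nabla u_0|$ a.e.\ on $B$; hence the two $\phi$-energy densities agree and, by~\eqref{singular}, $S(v)\cap B=S(u_0)\cap B$. A standard comparison shows $v$ minimises $I_\phi$ on $B$ among $\SS^q$-valued maps with the same trace on $\partial B$: given any such competitor $w$, the composition $\pi\circ w$ has trace $u_0$ on $\partial B$ and $\phi$-energy equal to $I_\phi(w;B)$, so extending it by $u_0$ outside $B$ produces an admissible competitor in $\mathscr{A}$, and minimality of $u_0$ forces $I_\phi(v;B)\leq I_\phi(w;B)$.

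The main technical step, and the core of the argument, is then to establish the improved partial regularity for the lifted minimiser $v$: since $\SS^q$ is simply connected for $q\geq 2$, $v$ has a discrete singular set in $B$, i.e.\ $\dim_\H S(v)=0$, improving the generic bound $\dim_\H S(v)\leq 1$. This is obtained by Federer-type dimension reduction applied to minimising tangent maps: blow-ups at singular points yield $0$-homogeneous minimising $\phi$-harmonic maps $\R^3\to\SS^q$, and iterating the reduction along singular rays one is led to $0$-homogeneous minimising maps $\R^2\to\SS^q$. Such maps must be constant, for otherwise their angular trace $f\colon\SS^1\to\SS^q$ is null-homotopic in $\SS^q$, and a suitably rescaled null-homotopy of $f$ inserted near the origin, combined with the power-like growth~\eqref{blowup} of $\phi$, produces a competitor with strictly smaller $\phi$-energy. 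Being relatively closed of Hausdorff dimension $0$, $S(v)\cap B'$ is finite for every $B'\csubset B$, and the identity $S(u_0)\cap B=S(v)\cap B$ yields~(ii).
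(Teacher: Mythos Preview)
Your approach is essentially the paper's: apply Corollary~\ref{cor:partiallifting} to obtain~$\Sigma$, check $\Sigma\subseteq S(u_0)$, lift locally to~$\SS^q$ away from~$\Sigma$, verify the lift is minimising, and run Federer dimension reduction (the paper packages this last step as Proposition~\ref{prop:S2} and Lemma~\ref{lemma:MTM}).

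Two imprecisions are worth flagging. First, tangent maps of a $\phi$-minimiser are not $\phi$-harmonic but \emph{$p$-harmonic}: under the rescaling $v_\rho(x)=v(x_0+\rho x)$ one has $\rho^{p-3}\int_{B_\rho}\phi(|\nabla v|)=\alpha\int_{B_1}|\nabla v_\rho|^p+\mathrm{O}(\rho^p)$ by~\eqref{blowup}, so the limiting problem is the pure $p$-energy; your competitor for the $0$-homogeneous two-dimensional tangent map should beat the $p$-energy, not the $\phi$-energy. Second, ``relatively closed of Hausdorff dimension~$0$'' does \emph{not} imply locally finite (e.g.\ $\{1/n\}\cup\{0\}$); the Federer reduction gives local finiteness directly, by showing that an accumulation of singular points would produce a nonconstant $0$-homogeneous minimising $p$-harmonic map $B^2\to\SS^q$, contradicting Lemma~\ref{lemma:MTM}.
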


We now present the proof of Theorem~\ref{th:stratification}.
For the sake of completeness, we first give a proof of~\eqref{blowup}.

\begin{proof}[Proof of \eqref{blowup}]
 Set~$M := \sup_{t\geq 0}|t\phi^\prime(t) - p\phi(t)|$; $M$ is finite
 by assumption~\eqref{hp:subquadratic}. We have
 \[
   \abs{\frac{\d}{\d t}\left(\frac{\phi(t)}{t^p}\right)} \leq \frac{M}{t^{p+1}}
   \qquad \textrm{for any } t> 0
 \]
 and hence, by integrating,
 \begin{equation} \label{calculus1}
   \frac{\phi(s)}{s^p} - \frac{M}{p \, s^p} \leq \frac{\phi(t)}{t^p}
   \leq \frac{\phi(s)}{s^p} + \frac{M}{p \, s^p}
   \qquad \textrm{for any } t\geq s >0 .
 \end{equation}
 Therefore, the function $t\mapsto t^{-p}\phi(t)$ is bounded on~$[1, \, +\infty)$,
 and there must be a subsequence~$t_k\nearrow+\infty$ such that
 $t_k^{-p}\phi(t_k)$ converges to a finite limit~$\alpha\geq 0$.
 Taking~$t=t_k$ in~\eqref{calculus1}, and passing to the limit as~$k\to+\infty$,
 we deduce that $s^{-p}\phi(s) = \alpha + \mathrm{O}(s^{-p})$,
 i.e. $\phi(s) = \alpha s^p + \mathrm{O}(1)$, as~$s\to+\infty$.
 Note that~$\alpha$ cannot be zero, because $\phi$ is strictly
 increasing and strictly convex, hence $\phi(t)\to+\infty$ as~$t\to+\infty$.
 Finally, using~\eqref{hp:subquadratic} again, we obtain
 \[
  \phi^\prime(t) - \alpha p \, t^{p-1} = \frac{1}{t}\left(t \phi^\prime(t) - p\phi(t)\right)
  + \frac{p}{t}\left(\phi(t) - \alpha t^p\right) = \mathrm{O}(t^{-1}) 
  \qquad \textrm{as } t\to+\infty. \qedhere
 \]
\end{proof}

To prove Theorem~\ref{th:stratification}, we apply
Corollary~\ref{cor:partiallifting} so to reduce, locally out of an exceptional set,
to a $\SS^q$-valued problem instead of an $\NN$-valued one. We will then  
invoke the following property of 
minimising $\phi$-harmonic maps with values in the sphere~$\SS^q$.

\begin{prop} \label{prop:S2}
 Let~$B\subseteq\R^3$ be an open ball, let~$q\geq 2$ be an integer, let~$1 < p < 2$,
 and let $v_{\mathrm{bd}}\in W^{1-1/p, p}(\partial B, \, \SS^q)$.
 Let~$v_0$ be a minimiser of the functional~\eqref{energy} in the class
 \[
  \mathscr{B} := \left\{v\in W^{1,p}(B, \, \SS^q)\colon
  v=v_{\mathrm{bd}} \textrm{ on } \partial B \right\} \!,
 \]
 and let~$S(v_0)\subseteq B$ be defined as in~\eqref{singular}.
 Then, $S(v_0)$ is a locally finite set and 
 $v_0\in C^{1,\alpha}_{\mathrm{loc}}(\Omega\setminus S(v_0))$
 for some~$\alpha\in (0, \, 1)$.
\end{prop}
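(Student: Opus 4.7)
The strategy is to combine the standard monotonicity formula and $\varepsilon$-regularity for minimising $\phi$-harmonic maps with a tangent-map classification, exploiting that $\pi_1(\SS^q)=0$ for $q\geq 2$. The interior $C^{1,\alpha}$-regularity on $B\setminus S(v_0)$ follows from the partial regularity results for $\phi$-harmonic maps in~\cite{Luckhaus-PartialReg} together with \eqref{hp:first}--\eqref{hp:last} and \eqref{blowup}; the bulk of the work is to upgrade $\dim_{\H}S(v_0)\leq 1$ to the assertion that $S(v_0)$ is locally finite.

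First, I would set up the monotonicity formula for $r\mapsto r^{p-3}\int_{B_r(x)}\phi(|\nabla v_0|)$. This is standard for $p$-harmonic maps, and the error arising from the perturbation $\phi(t)-\alpha t^p=\mathrm{O}(1)$ supplied by \eqref{blowup} is absorbed at small scales. Combined with a Luckhaus-type $\varepsilon$-regularity statement, this gives a universal constant $\varepsilon_0>0$ such that the density $\Theta(x_0):=\lim_{r\to 0}r^{p-3}\int_{B_r(x_0)}\phi(|\nabla v_0|)$ is well-defined at every $x_0\in B$ and satisfies $\Theta(x_0)\geq\varepsilon_0$ at every $x_0\in S(v_0)$. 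Upper semicontinuity of $\Theta$ then makes $S(v_0)$ relatively closed in $B$.

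The core step is the analysis of tangent maps at a point $x_0\in S(v_0)$. The blow-ups $v_{x_0,\rho}(y):=v_0(x_0+\rho y)$ converge, along a subsequence and strongly in $W^{1,p}_{\mathrm{loc}}(\R^3)$, to a minimising $p$-harmonic tangent map $\tau\colon\R^3\to\SS^q$ which is $0$-homogeneous (the perturbation of $\phi$ away from the pure $p$-energy disappears under the rescaling, thanks to \eqref{blowup}). I claim that any such $\tau$ is smooth on $\R^3\setminus\{0\}$. Indeed, since $\tau$ is $0$-homogeneous its singular set is a cone from the origin; if it were not just $\{0\}$, a second blow-up at any point $y_0\neq 0$ of the singular set yields, via the standard Federer dimension-reduction machinery, a minimising $0$-homogeneous $p$-harmonic map $\R^2\to\SS^q$ of the form $x\mapsto\sigma(x/|x|)$ for some non-constant $\sigma\in W^{1,p}(\SS^1,\SS^q)$. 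Because $\pi_1(\SS^q)=0$ for $q\geq 2$, $\sigma$ is null-homotopic and admits a smooth filling $\tilde\sigma\colon\overline{D}\to\SS^q$ with $\tilde\sigma|_{\partial D}=\sigma$; an energy comparison, which exploits $p<2$ to make the relevant radial integrals converge, shows that one can choose $\tilde\sigma$ so that $\int_D|\nabla\tilde\sigma|^p$ is strictly smaller than the conic energy $(2-p)^{-1}\int_{\SS^1}|\partial_\theta\sigma|^p$, contradicting minimality.

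Once every tangent map is known to be smooth on $\R^3\setminus\{0\}$, local finiteness of $S(v_0)$ follows from a compactness-and-contradiction argument: at every $x_0\in S(v_0)$, one picks $\rho>0$ small enough that $v_{x_0,\rho}$ is $W^{1,p}$-close to a tangent map $\tau$ on the annulus $B_2\setminus B_{1/2}$, then applies $\varepsilon$-regularity to a dyadic cover of $B_\rho(x_0)\setminus\{x_0\}$ on which the energy density of $\tau$ is small; this forces $v_0$ to be smooth on $B_\rho(x_0)\setminus\{x_0\}$, so $S(v_0)\cap B_\rho(x_0)=\{x_0\}$. The main obstacle is the energy comparison in the 2D step: unlike the $p=2$ case, the conic extension $\sigma(x/|x|)$ has \emph{finite} $p$-energy, so one must actually exhibit a concrete smooth null-homotopy of $\sigma$ --- for instance, shrinking $\sigma$ to a constant along a geodesic null-homotopy in $\SS^q$, with a radial profile optimised so that the resulting extension strictly improves on the conic energy --- and quantitatively verify the strict inequality.
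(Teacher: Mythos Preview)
Your proposal is correct and follows essentially the same route as the paper: an almost-monotonicity formula for $r\mapsto r^{p-3}\int_{B_r}\phi(|\nabla v_0|)$, blow-up to a $0$-homogeneous minimising $p$-harmonic tangent map (the $\phi$-perturbation washes out by~\eqref{blowup}), and then Federer dimension reduction plus the Schoen--Uhlenbeck isolation argument to obtain local finiteness. The only organisational difference is that the paper packages the two-dimensional step --- that a $0$-homogeneous minimising $p$-harmonic map $B^2\to\SS^q$ with $q\geq 2$ and $1<p<2$ is constant --- as a separate Lemma~\ref{lemma:MTM} (cited from~\cite{GAB}), whereas you sketch its proof directly via the null-homotopy/energy-comparison construction; your identification of this strict-inequality comparison as the one genuinely nontrivial point is accurate.
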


In turns, the proof of Proposition~\ref{prop:S2} is based on ``dimension reduction''
argument, and uses the following lemma (whose proof may be found,
e.g., in~\cite[Lemma~16]{GAB}).

\begin{lemma} \label{lemma:MTM}
 Let~$B^2\subseteq\R^2$ be the open, unit ball, let~$q\geq 2$
 be an integer, let~$1 < p < 2$. If a map $w_0\in W^{1,p}(B^2, \, \SS^q)$
 is $0$-homogenous (i.e. $w_0(x) = w_0(x/|x|)$ for a.e.~$x$)
 and satisfies
 \[
  \int_{B^2} \abs{\nabla w_0}^p \leq \int_{B^2} \abs{\nabla w}^p
  \quad \textrm{for any } w \in W^{1, p}(B^2, \, \SS^q)
  \textrm{ such that } w = w_0 \textrm{ on } \partial B^2,
 \]
 then~$w_0$ is constant.
\end{lemma}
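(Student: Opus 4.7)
My plan is to argue by contradiction: assuming $w_0$ is a $0$-homogeneous minimizer, I show that if $w_0$ is not constant then I can build a strictly better competitor. Passing to polar coordinates $(r,\theta)$ on $B^2$, the $0$-homogeneity of $w_0$ gives $w_0(r,\theta)=\gamma(\theta)$ for some $\gamma\in W^{1,p}(\SS^1,\SS^q)$, and a direct computation yields
\[
 \int_{B^2}|\nabla w_0|^p=\frac{1}{2-p}\int_0^{2\pi}|\gamma'(\theta)|^p\,d\theta,
\]
which is finite exactly because $p<2$. Next I would use the Euler--Lagrange equation: as a minimizer, $w_0$ is weakly $p$-harmonic, and under $0$-homogeneity the PDE reduces to the $1$-dimensional $p$-harmonic equation $-\partial_\theta(|\gamma'|^{p-2}\gamma')=|\gamma'|^p\gamma$ for $\gamma\colon\SS^1\to\SS^q$. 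Testing this against $\gamma'$ forces $|\gamma'|$ to be constant, say $|\gamma'|\equiv k$, and the resulting ODE $\gamma''=-k^2\gamma$ combined with $2\pi$-periodicity shows that $\gamma$ parametrizes a great circle of $\SS^q$ traversed an integer number $k\geq 0$ of times. Since $k=0$ is already the conclusion, I assume $k\geq 1$ and, using the $\mathrm{SO}(q+1)$-invariance of the problem, take $\gamma(\theta)=(\cos k\theta,\sin k\theta,0,\ldots,0)$.

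Now I would exploit the hypothesis $q\geq 2$ to construct an explicit competitor that uses an extra dimension. Pick $e:=(0,0,1,0,\ldots,0)\in\SS^q$, which is orthogonal to $\gamma(\theta)$ for every $\theta$, and for a smooth $g\colon[0,1]\to[0,\pi/2]$ with $g(0)=0$ and $g(1)=\pi/2$ define
\[
 w(r,\theta):=\sin g(r)\,\gamma(\theta)+\cos g(r)\,e.
\]
Then $w$ is $\SS^q$-valued, satisfies $w=\gamma$ on $\partial B^2$, and extends continuously to the constant value $e$ at the origin. A short computation gives $|\nabla w|^2=g'(r)^2+k^2\sin^2 g(r)/r^2$, whence
\[
 \int_{B^2}|\nabla w|^p=2\pi\int_0^1\Bigl(g'(r)^2+\frac{k^2\sin^2 g(r)}{r^2}\Bigr)^{p/2}r\,dr.
\]

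The main obstacle is the final one-variable optimization: choosing $g$ so that the integral above drops strictly below $2\pi k^p/(2-p)$ for every integer $k\geq 1$ and every $1<p<2$. A convenient ansatz is $g(r):=2\arctan(r^\alpha)$, for which $\sin g(r)=2r^\alpha/(1+r^{2\alpha})$ and the integral collapses to a single elementary expression in the parameter $\alpha>0$. The subquadraticity $p<2$ enters crucially here: for an appropriate tuning $\alpha=\alpha(p,k)$ the radial weight $r\,dr$ produces a gain over the $0$-homogeneous extension which disappears exactly at $p=2$ (and this is precisely why the lemma is stated only in the subquadratic regime). Verifying the resulting scalar inequality is a short but careful calculus exercise, and once established it contradicts the minimality of $w_0$, forcing $k=0$ and hence $\gamma$, and thus $w_0$, to be constant.
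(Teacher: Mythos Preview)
The paper does not actually prove this lemma; it simply cites an external reference (\cite[Lemma~16]{GAB}). So there is no ``paper's proof'' to compare against, and your task is really to give a self-contained argument.

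Your overall strategy is sound and is the standard one: reduce to the trace~$\gamma$ on~$\SS^1$, use the Euler--Lagrange equation to identify~$\gamma$ as a constant-speed great circle, and then beat the $0$-homogeneous extension by an explicit ``escape through the pole'' competitor exploiting~$q\geq 2$. Two points deserve more care.

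First, the phrase ``testing against~$\gamma'$'' is ambiguous. If you mean the integrated (weak) pairing, you only get the tautology $0=0$, since both sides integrate a total derivative. What works is to first invoke interior regularity for minimising $p$-harmonic maps (so that~$\gamma$ is smooth where $|\gamma'|\neq 0$ and the equation holds pointwise), and then take the \emph{pointwise} inner product of $-\partial_\theta(|\gamma'|^{p-2}\gamma')=|\gamma'|^p\gamma$ with~$\gamma'$; this yields $(p-1)|\gamma'|^{p-2}(\gamma'\cdot\gamma'')=0$, hence $|\gamma'|$ is locally constant on $\{|\gamma'|>0\}$ and therefore constant on~$\SS^1$.

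Second, and more substantially, you defer the final comparison to ``a short but careful calculus exercise'' without doing it. With $\alpha=k$ your competitor has $|\nabla w|^2=8k^2 r^{2k-2}/(1+r^{2k})^2$, and after the change of variable $s=r^{2k}$ the required inequality becomes
\[
 \frac{8^{p/2}}{2k}\int_0^1 \frac{s^{(k-1)(p-2)/(2k)}}{(1+s)^p}\,ds \;<\; \frac{1}{2-p}
 \qquad\text{for all }k\geq 1,\ 1<p<2.
\]
This is true, but for $k=1$ the left-hand side tends to $\sqrt{2}\,\ln 2\approx 0.98$ as $p\to 1^+$, so the margin is genuinely small and the verification is not a throwaway line. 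You should either carry out this estimate explicitly (for $k=1$ a direct computation of $\frac{8^{p/2}}{2}\cdot\frac{1-2^{1-p}}{p-1}$ and a monotonicity argument in~$p$ suffice; for $k\geq 2$ the factor $1/(2k)$ gives ample room), or cite a reference where it is done. Without this, the proof is incomplete at exactly the step where the hypothesis $p<2$ is decisive.
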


\begin{remark} \label{remark:target}
 Theorem~\ref{th:stratification} can be generalised, with the same proof, 
 to compact target manifolds~$\NN$ with finite and abelian 
 fundamental group~$\pi_1(\NN)$, \emph{provided that} the analogue of Lemma~\ref{lemma:MTM}
 holds for maps $w_0$ with values in the universal covering space~$\EE$ of~$\NN$.
\end{remark}

\begin{remark} \label{remark:stationary}
 Lemma~\ref{lemma:MTM}
 fails for non-minimising critical points of~\eqref{energy}.
 Consider, for instance, the map $v\colon B^2\to\SS^2$ given by
 $v(x) := (x_1/\abs{x}, \, x_2/\abs{x}, \, 0)$ for $x = (x_1, \, x_2)\in B^2$.
 A computation shows that $v$ belongs to 
 $W^{1,p}(B^2, \, \SS^2)$ and is a stationary $p$-harmonic map, i.e.
 it satisfies the conditions
 \begin{gather*}
  -\partial_j \left(\abs{\nabla v}^{p-2}\partial_j v\right) 
     = \abs{\nabla v}^p v \quad \textrm{on } B^2\\
  -\partial_j \left(\partial_j v\cdot \partial_k v - 
     \frac{1}{p}\abs{\nabla v}^p\delta_{jk}\right) = 0
     \quad \textrm{on } B^2 \textrm{ for any } k\in\{1, \, 2\},
 \end{gather*}
 corresponding to criticality for~\eqref{energy} with respect 
 to outer and inner variations respectively, in case~$\phi(t) = t^p/p$.
 (Einstein's summation convention is assumed.)
 Moreover, if the denote by~$\pi$ the universal covering map~$\SS^2\to\PR$,
 then the map $\pi\circ v$ is a $\PR$-valued stationary $p$-harmonic map
 (because $\pi$ is a local isometry), and $\pi\circ v$ has an 
 \emph{orientable} singularity of codimension two  on the~$z$-axis.
\end{remark}

\begin{proof}[Proof of Proposition~\ref{prop:S2}]
 The proof of this result follows by classical arguments, e.g. the ones 
 in~\cite[Theorem~IV]{SchoenUhlenbeck} (for~$\phi(t) = t^2$) 
 or \cite[Theorem~4.5]{HardtLin} (in case~$\phi(t) = t^p$).
 We only sketch here the main steps in the proof and indicate how the
 arguments need be to adapted.
 
 The minimiser~$v_0$ satisfies an almost-monotonicity identity: namely,
 for any ball~$B_R(x_0)\csubset B$ and any $0<r<R$, there holds
 \begin{equation} \label{monotonicity}
  \begin{split}
   &R^{p-3}\int_{B_R(x_0)} \phi(\abs{\nabla v_0}) - r^{p-3}\int_{B_r(x_0)} \phi(\abs{\nabla v_0})
   + \int_r^R \xi^{p-4} \left(\int_{B_\xi(x_0)} \psi(\abs{\nabla v_0}) \right)\, \d\xi \\
   &\qquad\qquad = \int_{B_R(x_0)\setminus B_r(x_0)} \abs{x - x_0}^{p-3}
     \frac{\phi^\prime(\abs{\nabla v_0})}{\abs{\nabla v_0}} \abs{\partial_\nu v_0}^2 \, \d x \geq 0
  \end{split}
 \end{equation}
 where~$\nu := (x - x_0)/|x - x_0|$ and~$\psi$ is defined in~\eqref{hp:subquadratic}. 
 Since~$\psi$ is bounded by assumption, 
 the last term in the left-hand side is bounded by~$R^p - r^p$. 
 As usual, this identity is proved by considering the comparison 
 maps~$v_t := v_0\circ\varphi_t$, where~$(\varphi_t)_{t\in\R}$
 is the flow of a suitable radial vector field; see e.g. \cite[Lemma~5]{GAB} for more details.
 
 Now, let~$x_0\in B$ be fixed, and let~$v_\rho\colon B_1\to\SS^2$ be defined by
 $v_\rho(x) := v_0(x_0 + \rho x)$
 for~$x\in B_1$ and~$0<\rho<\dist(x_0, \partial B)$.
 Using~\eqref{blowup}, we see that
 \begin{equation*} 
  \rho^{p-3}\int_{B_\rho(x_0)} \phi(\abs{\nabla v_0}) 
  = \rho^p \int_{B_1} \phi\left(\frac{\abs{\nabla v_\rho}}{\rho}\right)
  = \alpha\int_{B_1} \abs{\nabla v_\rho}^p + \mathrm{O}(\rho^p) 
  \qquad \textrm{as }\rho\to 0,
 \end{equation*}
 and the left-hand side stays bounded as~$\rho\to 0$, due to~\eqref{monotonicity}.
 Thus, we can extract a subsequence~$\rho_j\searrow 0$ such that
 $v_{\rho_j}$ converges $W^{1,p}$-weakly to a map~$v_*\in W^{1,p}(B_1, \, \SS^q)$.
 Arguing as in~\cite[Theorem a]{Luckhaus-PartialReg} we deduce that, 
 for any~$\lambda\in (0, \, 1)$, ${v_*}_{|B_\lambda}$ is minimising $p$-harmonic 
 (subject to its own boundary conditions) and the convergence~$v_{\rho_j}\to v_*$
 is actually strong in~$W^{1,p}_{\mathrm{loc}}(B_\lambda)$.
 We take $0 <s\leq t < 1$ and apply the monotonicity formula~\eqref{monotonicity},
 with the choice~$R = t\rho_j$, $r=s\rho_j$.
 By changing variable in the integrals, we obtain
 \begin{equation*} 
  \begin{split}
   &\rho_j^{p-1}\int_{B_t\setminus B_s} \abs{x}^{p-3}
     \phi^\prime\left(\frac{|\nabla v_{\rho_j}|}{\rho_j}\right) 
     \frac{|\partial_\nu v_{\rho_j}|^2}{|\nabla v_{\rho_j}|}  \, \d x 
     - \rho_j^{p}\int_{s}^{t} \xi^{p-4} 
      \left(\int_{B_{\xi}} \psi\left(\frac{|\nabla v_{\rho_j}|}{\rho_j}\right) \right)\, \d\xi\\
   &\qquad\qquad= t^{p-3}\rho_j^p\int_{B_t} \phi\left(\frac{|\nabla v_{\rho_j}|}{\rho_j}\right)
     - s^{p-3}\rho^p_j\int_{B_s} \phi\left(\frac{|\nabla v_{\rho_j}|}{\rho_j}\right)
   \end{split}
 \end{equation*}
 Thanks to the strong convergence $v_{\rho_j}\to v_*$ 
 in~$W^{1,p}_{\mathrm{loc}}(B_\lambda)$, we have 
 $|\nabla v_{\rho_j}|/\rho_j\to +\infty$ a.e.
 (possibly up to a subsequence). Then, by applying~\eqref{blowup} and
 \eqref{hp:subquadratic}, we deduce that
 \begin{equation*} 
  \begin{split}
   p \int_{B_t\setminus B_s} \abs{x}^{p-d}
     |\nabla v_{\rho_j}|^{p-2}|\partial_\nu v_{\rho_j}|^2 \, \d x
   = t^{p-d}\int_{B_t} |\nabla v_{\rho_j}|^p
   - s^{p-d}\int_{B_s} |\nabla v_{\rho_j}|^p  + \mathrm{O}(\rho_j^p)
  \end{split}
 \end{equation*}
 and, passing to the limit as~$j\to+\infty$, we conclude that
 $\partial_\nu v_* = 0$ a.e., that is, $v_*$ is homogeneous of degree~$0$.
 At this point, we are in position to conclude the proof by repeating the arguments 
 in~\cite[Theorem~IV]{SchoenUhlenbeck}, with the help of Lemma~\ref{lemma:MTM}.
\end{proof}

\begin{proof}[Proof of Theorem~\ref{th:stratification}]
 Let~$\Sigma := \Sigma(u_0) = \Sigma_{\mathrm{lift}}(u_0)$.
 We have~$\Sigma\subseteq S(u_0)$
 because $u_0$ is continuous on $\Omega\setminus S(u_0)$ and,
 in particular, $u_0$ can be lifted locally away from~$S(u_0)$.
 By Corollary~\ref{cor:partiallifting} we know that $\dim_{\H}(\Sigma\cap B)\geq 1$
 for any ball~$B$ that intersects~$\Sigma$, while~$\dim_{\H} S(u_0)\leq 1$ by
 \cite{HardtLin, Luckhaus-PartialReg}.
 Therefore, we conclude that $\dim_{\H}(\Sigma\cap B)=1$ 
 for any ball~$B$ that intersects~$\Sigma$.
 
 Now, let~$B\csubset\Omega\setminus\Sigma$ be an open ball.
 By Corollary~\ref{cor:partiallifting},
 there exists a map~$v_0\in W^{1, p}(B, \, \SS^q)$ such
 that~$u_0 = \pi\circ v_0$ a.e. on~$B$,
 where $\pi\colon\SS^q\to\NN$ is the universal covering map.
 Since~$\pi$ is a local isometry (and hence $|\nabla v_0| = |\nabla u_0|$ a.e.),
 we deduce that $v_0$ is $\phi$-minimising harmonic
 in~$B$ subject to its own boundary condition,
 so $S(v_0)$ is locally finite by Proposition~\ref{prop:S2}.
 On the other hand, since $|\nabla v_0| = |\nabla u_0|$ a.e.,
 we have $S(u_0)\cap B = S(v_0)$ and the theorem follows.
\end{proof}

\section*{Acknowledgements}
 We acknowledge the anonymous referees for their careful
 reading of the manuscript and for their
 suggestions, which improved the presentation of the results.
 G. C.'s  research  was supported by the Basque Government
 through the BERC 2018-2021 program
 and by the Spanish Ministry of Economy and Competitiveness: 
 MTM2017-82184-R. G. O. was partially supported by GNAMPA-INdAM.


\bibliographystyle{plain}
\bibliography{singular_set}

\end{document}